\documentclass[final,1p,times]{elsarticle}
\usepackage{amssymb}
\usepackage{amsmath}
\usepackage{amsthm}
\usepackage[utf8]{inputenc}
\usepackage{marginnote}
\usepackage{amsfonts}
\usepackage{textcomp}
\usepackage{color,soul}
\usepackage{placeins}
\usepackage{tikz}
\usepackage{makecell}
\usepackage{amssymb}
\usepackage{graphicx}
\usepackage{hyperref}
\usepackage[none]{hyphenat}
\usepackage{lipsum}
\newtheorem{thm}{Theorem}[section]

\newtheorem{rem}[thm]{Remark}
\newtheorem{prop}[thm]{Proposition}
\newtheorem{cor}[thm]{Corollary}
\newtheorem{example}[thm]{Example}
\newdefinition{defn}[thm]{Definition}
\allowdisplaybreaks

\journal{Mathematische Nachrichten}
\begin{document}
\begin{frontmatter}
		\title{ Jensen-type inequalities on pairs of measure spaces and their applications}
        \author{Shankhadeep Mondal}
		\ead{shankhadeep.mondal@ucf.edu}
        \author{P. D. Johnson}
		\ead{johnspd@auburn.edu}
		\author{R. N. Mohapatra}
		\ead{ram.mohapatra@ucf.edu}
        \author{David Guinovart}
		\ead{sguino001@umn.edu}

		 \address{School of Mathematics, University of Central Florida, Orlando, Florida-32816}
		\address{Department of Mathematics and Statistics, Auburn University, Auburn, AL-36849 }
       \address{School of Mathematics, University of Central Florida, Orlando, Florida-32816}
       \address{Hormel Institute, University of Minnesota, Austin, MN 55912}
       
		\cortext[shan]{Corresponding Author:shankhadeep.mondal@ucf.edu}
		
\begin{abstract}
We develop applications of a Jensen-type inequality for pairs of positive measure spaces. The framework yields $L^p$ and operator inequalities for positive integral operators, norm and entropy estimates for Fej\'er means, and probabilistic moment and variance bounds. We also obtain robustness estimates under random and deterministic erasures. These results provide a unified connection between Jensen-type inequalities, harmonic analysis, operator theory, probability, and robust reconstruction.
\end{abstract}

\begin{keyword}
Jensen-type inequality; positive measure space; positive integral operator; Fejér means; entropy inequality; erasure robustness.

\MSC[2010] 26D15, 47G10, 42A10, 60E15, 46E30, 05C50
\end{keyword}

\end{frontmatter}

\section{Introduction and background}

Inequalities involving vertex degrees and average degrees arise naturally in graph and hypergraph theory. For a $k$-uniform hypergraph $G=(V,E)$ with incidence matrix $M=[M(v,e)]$, the $k$-uniformity condition means that every column of $M$ has the same sum $k$. The degree of $v\in V$ is
\[
d(v)=\sum_{e\in E}M(v,e),
\]
and its average degree is
\[
\bar d=\frac{1}{|V|}\sum_{v\in V}d(v).
\]
Johnson and Perry \cite{JP} established the following general matrix
inequality relating such quantities.

\begin{thm}[\cite{JP}]\label{t1}
Let $M=[m_{ij}]$ be a $p\times q$ real matrix with constant column
sum $c$ and row sums $d_1,\ldots,d_p$. Set
$\bar d=\frac{1}{p}\sum_{i=1}^p d_i=\frac{qc}{p}$.
If $f(t)=t\varphi(t)$ is convex on an interval containing
$d_1,\ldots,d_p$, then
\[
\frac{1}{q}\sum_{j=1}^q\sum_{i=1}^p
m_{ij}\varphi(d_i)\geq c\varphi(\bar d).
\]
If $f$ is strictly convex, equality holds if and only if
$d_1=\cdots=d_p$.
\end{thm}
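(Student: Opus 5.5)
The approach is to rewrite the left-hand side by interchanging the order of summation, and then apply Jensen's inequality to $f(t)=t\varphi(t)$ with uniform weights $1/p$. Summing over $j$ first and using the definition of the row sums gives
\[
\frac{1}{q}\sum_{j=1}^q\sum_{i=1}^p m_{ij}\varphi(d_i)=\frac{1}{q}\sum_{i=1}^p\varphi(d_i)\sum_{j=1}^q m_{ij}=\frac{1}{q}\sum_{i=1}^p d_i\varphi(d_i)=\frac{1}{q}\sum_{i=1}^p f(d_i).
\]
Only the row sums $d_i$ enter here, not the signs of the $m_{ij}$. So the argument works for an arbitrary real matrix, and the constant column sum $c$ is used only through the identity $\sum_i d_i=\sum_j\sum_i m_{ij}=qc$, i.e.\ $\bar d=qc/p$.

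Next, Jensen's inequality for the convex function $f$ on the interval containing the $d_i$ (which also contains their mean $\bar d$) gives
\[
\frac{1}{p}\sum_{i=1}^p f(d_i)\ \ge\ f(\bar d)=\bar d\,\varphi(\bar d).
\]
Multiplying by $p/q>0$ yields
\[
\frac{1}{q}\sum_{i=1}^p f(d_i)\ge \frac{p}{q}\,\bar d\,\varphi(\bar d)=c\,\varphi(\bar d),
\]
since $p\bar d/q=c$. This is the claimed inequality.

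For the equality statement, the rewriting in the first step is an identity, and the final scaling is by the positive constant $p/q$. Equality therefore holds if and only if equality holds in the discrete Jensen inequality with the strictly positive weights $1/p$. For strictly convex $f$, that happens exactly when all the $d_i$ coincide. The converse direction is immediate: if $d_1=\cdots=d_p$, both sides equal $c\varphi(\bar d)$.

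There is no real obstacle here. The only points needing care are that no positivity of the $m_{ij}$ is needed, because the summation is collapsed before any inequality is applied, and that $\bar d$ lies in the interval of convexity, which follows because intervals are convex.
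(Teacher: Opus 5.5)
Your proof is correct and complete. Collapsing the double sum to $\frac1q\sum_i f(d_i)$ through the row sums, before any inequality is used, then applying Jensen with weights $1/p$ and the identity $p\bar d/q=c$, gives both the inequality and the equality case with no sign assumptions on the $m_{ij}$.

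The paper does not reprove this result: it is quoted from \cite{JP}, and it is the counting-measure, $wt\equiv 1$ case of Theorem~\ref{t3}. Your argument is, however, exactly the discrete form of the collapse-then-convexity mechanism the paper uses in the proof of Proposition~\ref{prop:variance-gap}. There, $\int_{V\times E}\varphi(\delta(v))M(v,e)\,wt(e)\,d(\mu\times\tau)=\int_V f(\delta)\,d\mu$ and $\frac{\mu(V)}{s}f(\bar\delta)=c\varphi(\bar\delta)$.
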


The weighted analogue obtained by Johnson and Mohapatra \cite{PJ}
replaces the ordinary row sums by weighted row sums.

\begin{thm}[\cite{PJ}]\label{t2}
Let $M=[m_{ij}]$ be a $p\times q$ real matrix with constant column
sum $c$, and let $(x_1,\ldots,x_q)\in\mathbb{R}^q$ with
$s=\sum_{j=1}^q x_j>0$. Set
$d_i=\sum_{j=1}^q m_{ij}x_j$ and
$\bar d=\frac{1}{p}\sum_{i=1}^p d_i$.
If $f(t)=t\varphi(t)$ is convex on an interval containing
$d_1,\ldots,d_p$, then
\[
\frac{1}{s}\sum_{j=1}^q\sum_{i=1}^p
\varphi(d_i)m_{ij}x_j\geq c\varphi(\bar d).
\]
If $f$ is strictly convex, equality can occur only when
$d_1=\cdots=d_p$.
\end{thm}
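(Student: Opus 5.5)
The plan is to reduce the statement to the ordinary finite Jensen inequality for the convex function $f(t)=t\varphi(t)$. The double sum on the left-hand side collapses once we sum over $j$ first. The key steps, in order, are as follows.

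First, interchange the order of summation and pull $\varphi(d_i)$ out of the inner sum:
\[
\sum_{j=1}^q\sum_{i=1}^p \varphi(d_i)m_{ij}x_j=\sum_{i=1}^p \varphi(d_i)\sum_{j=1}^q m_{ij}x_j=\sum_{i=1}^p d_i\varphi(d_i)=\sum_{i=1}^p f(d_i).
\]
Second, compute $\bar d$ using the constant column sum $c$:
\[
\sum_{i=1}^p d_i=\sum_{j=1}^q x_j\sum_{i=1}^p m_{ij}=cs,
\qquad\text{so}\qquad \bar d=\frac{cs}{p}.
\]
This is the exact analogue of $\bar d=qc/p$ in Theorem~\ref{t1}, with $q$ replaced by $s$.

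Third, apply the finite Jensen inequality with equal weights $1/p$ to $f$. This is legitimate because $f$ is convex on an interval containing $d_1,\dots,d_p$, and hence containing their mean $\bar d$. It gives
\[
\frac1p\sum_{i=1}^p f(d_i)\ge f(\bar d)=\bar d\,\varphi(\bar d).
\]
Multiplying by $p$ yields $\sum_i f(d_i)\ge p\bar d\,\varphi(\bar d)=cs\,\varphi(\bar d)$. Since $s>0$, dividing by $s$ preserves the inequality and gives the claim. Note that no sign condition on the $m_{ij}$ or the $x_j$ is needed beyond $s>0$, which is used only in this last division.

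For the equality statement, equality in the original inequality forces equality in Jensen's inequality, because every other step is an identity or a division by $s>0$. For strictly convex $f$, equality in Jensen with strictly positive equal weights holds only when $d_1=\cdots=d_p$. This gives the "only when" direction, which is all the theorem asserts.

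There is no real obstacle here. The only points requiring care are the interchange of summation, which turns the weighted expression into $\sum_i f(d_i)$, and the identity $\sum_i d_i=cs$, which is where the constant column sum hypothesis enters.
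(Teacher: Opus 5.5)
Your proof is correct and complete. Collapsing the double sum to $\sum_{i=1}^p f(d_i)$, using the constant column sum to get $\sum_{i=1}^p d_i=cs$, and applying the equal-weight Jensen inequality (strict convexity giving the equality case) is the right argument, and your side remarks are accurate: $\bar d$ lies in the interval, and only $s>0$ is needed.

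The paper gives no proof of its own here; it quotes the result from \cite{PJ}. Theorem~\ref{t2} is the counting-measure case of Theorem~\ref{t3}. Your mechanism is the one the paper itself uses for that measure-space version; compare the proof of Proposition~\ref{prop:variance-gap}, where $\int_V f(\delta)\,d\mu$ is identified with the double integral and $\frac{\mu(V)}{s}f(\bar\delta)=c\varphi(\bar\delta)$.
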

The discrete framework was subsequently extended in \cite{johnson} to
pairs of positive measure spaces by replacing matrices and discrete
weights with measurable kernels and weight functions. We recall this
measure-theoretic version, which will be used throughout the paper. Throughout, a positive measure space means a measure space equipped
with a positive measure. For two such spaces $(V,\mu)$ and $(E,\tau)$,
we write $\mu\times\tau$ for the product measure. All functions and
kernels are assumed measurable whenever required, and all stated
integrals are assumed to be well defined.

\begin{thm} \cite{johnson} \label {t3}
Let $(V, \mu)$ and $(E, \tau)$ be positive measure spaces,\\
$M: V \times E \to \mathbb {R}$ a  $(\mu \times \tau)$-measurable function,\\
$wt: E \to \mathbb {R}$ a $\tau$-measurable function,\\
$I \subseteq \mathbb {R}$ an interval, and $\varphi: I \to \mathbb {R}$ a function such that $f(t) = t \varphi (t)$ is convex on $I$.  Suppose that the following, $(i)$ - $(vi)$, hold.

\begin{itemize}
\item [(i)] $0 < \mu (V) < \infty$.
\item [(ii)] $0 < s = \int _E wt \;\;\; d \tau < \infty$.
\item [(iii)] $\int _{V \times E} |M (v, e)| \mid wt (e)| d(\mu \times \tau) < \infty$.
\item [(iv)] For some real number $c$ and for almost all $v \in V$, 
$$
\int _V M(v, e) d \mu (v) = c,\quad \delta (v) = \int _E M(v, e) wt(e) d \tau (e) \in I,
$$
for almost all $e \in E$.

\item [(v)] $ \int_{V\times E} |\phi(\delta(v))|\,|M(v,e)|\,|wt(e)|\, d(\mu\times\tau) < \infty.$ With $\delta$ as defined in $(v)$, let $\bar {\delta} = \frac {1}{\mu (V)} \int _V \delta d_{\mu}$.
\end{itemize}
Then
$$  
\frac {1}{s} \int _{V \times E} \varphi (\delta (v)) M(v, e)  wt(e)  d(\mu \times \tau) \geq c \varphi (\bar {\delta}).  \qquad (*)
$$
Further, if $f$ is strictly convex on $I$ then equality holds in $(*)$ if and only if $\delta (v) = \bar {\delta}$ a.e. $(\mu)$.
\end{thm}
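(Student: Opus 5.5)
The plan is to reduce $(*)$ to Jensen's inequality for the convex function $f(t)=t\varphi(t)$ with respect to the normalized measure $\mu/\mu(V)$. The key identity comes from Fubini (justified by (iii) and (v)). Integrating first in $e$ for fixed $v$ gives
\[
\int_{V\times E}\varphi(\delta(v))M(v,e)\,wt(e)\,d(\mu\times\tau)=\int_V \varphi(\delta(v))\Bigl(\int_E M(v,e)wt(e)\,d\tau(e)\Bigr)d\mu(v)=\int_V \delta(v)\varphi(\delta(v))\,d\mu(v)=\int_V f(\delta)\,d\mu .
\]
So the left side of $(*)$ equals $\frac{1}{s}\int_V f(\delta)\,d\mu$.

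Next I compute $\bar\delta$, again by Fubini (now using (iii)), integrating in $v$ first. Reading hypothesis (iv) as the column condition $\int_V M(v,e)\,d\mu(v)=c$ for a.e.\ $e$, the analogue of the constant column sum in Theorem \ref{t1}, we get
\[
\int_V\delta\,d\mu=\int_E wt(e)\Bigl(\int_V M(v,e)\,d\mu(v)\Bigr)d\tau(e)=c\,s .
\]
Hence $\bar\delta=cs/\mu(V)$.

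Now apply Jensen's inequality to the probability measure $\mu/\mu(V)$, which is legitimate by (i). The function $\delta$ takes values in $I$ a.e., and $f(\delta)$ is integrable because $|f(\delta)|\le\int_E|\varphi(\delta)||M||wt|\,d\tau$, which is integrable by (v). This gives
\[
\frac{1}{\mu(V)}\int_V f(\delta)\,d\mu\ \ge\ f(\bar\delta)=\bar\delta\,\varphi(\bar\delta).
\]
Multiplying by $\mu(V)/s$ yields
\[
\frac1s\int_V f(\delta)\,d\mu\ \ge\ \frac{\mu(V)}{s}\,\bar\delta\,\varphi(\bar\delta)=c\,\varphi(\bar\delta),
\]
which is $(*)$. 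Note that $\bar\delta\in I$, since $I$ is an interval containing $\delta$ a.e.

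For the equality case, if $f$ is strictly convex then equality in Jensen's inequality holds if and only if $\delta$ is a.e.\ constant, that is, $\delta=\bar\delta$ a.e.\ $(\mu)$. I will prove this via a supporting line at $\bar\delta$. Write $f(t)\ge f(\bar\delta)+m(t-\bar\delta)$. Strict convexity makes this inequality strict for $t\ne\bar\delta$. Integrating gives equality only if the nonnegative gap vanishes a.e., which forces $\delta=\bar\delta$ a.e. The converse direction is immediate.

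The main obstacle I expect is bookkeeping rather than ideas. First, the hypotheses must be read correctly: the column condition is on $e$ and the membership $\delta(v)\in I$ is on $v$, which the statement garbles. Second, each Fubini swap must be justified by the right integrability hypothesis. Third, in the Jensen step, if $\bar\delta$ is an endpoint of $I$, a supporting line might not exist, so the equality argument needs a small separate remark. In that case $\delta=\bar\delta$ a.e.\ already follows, since an average sitting at an endpoint forces the function to equal that endpoint a.e.
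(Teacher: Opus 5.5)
Your proof is correct and complete. The paper gives no proof of this cited result. Your argument still follows the same route the paper uses in the proof of Proposition~\ref{prop:variance-gap}: use Fubini (via (v) and (iii)) to rewrite the left side of $(*)$ as $\frac1s\int_V f(\delta)\,d\mu$ and to get $\bar\delta=cs/\mu(V)$, so $c\varphi(\bar\delta)=\frac{\mu(V)}{s}f(\bar\delta)$; then apply Jensen for $\mu/\mu(V)$ with a supporting line for the strict case. Your corrected reading of (iv) and your endpoint remark are both sound.
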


The principal contribution of the present work is the development of a
unified application framework for the measure-theoretic Jensen-type
inequality in Theorem~\ref{t3}. In particular, we derive two-sided
$L^p$-estimates and operator-order inequalities for positive integral
operators, obtain norm and entropy estimates for Fej\'er means, establish
probabilistic moment and quantitative variance bounds, and formulate
robustness estimates for random and deterministic erasures. These
applications demonstrate that the underlying measure-space inequality
provides a common mechanism connecting harmonic analysis, operator theory,
probability, and robustness problems.

For the convenience of the reader, we summarize below the principal notation used throughout
the paper.
\begin{table}[ht]
\centering
\caption{Notation used throughout the paper}
\label{tab:notation}
\small
\begin{tabular}{ll}
\hline
Notation & Meaning\\
\hline
$(V,\mu),(E,\tau)$ & Positive measure spaces\\
$\mu\times\tau$ & Product measure on $V\times E$\\
$M(v,e)$ & Measurable kernel in Theorem~\ref{t3}\\
$wt(e)$ & Weight function on $E$\\
$s$ & Total weight, $\displaystyle s=\int_E wt\,d\tau$\\
$c$ & Constant satisfying $\displaystyle\int_V M(v,e)\,d\mu(v)=c$\\
$\delta(v)$ & $\displaystyle\int_E M(v,e)wt(e)\,d\tau(e)$\\
$\bar{\delta}$ & $\displaystyle\mu(V)^{-1}\int_V\delta\,d\mu$\\
$K(v,e)$ & Nonnegative kernel of an integral operator\\
$T$ & Integral operator,
$\displaystyle Tg(v)=\int_EK(v,e)g(e)\,d\tau(e)$\\
$\mathbf{1}_V,\mathbf{1}_E$ & Constant-one functions on $V$ and $E$\\
$\|\cdot\|_p$ & $L^p$-norm on the relevant measure space\\
\hline
\end{tabular}
\end{table}

\section{Application to Product Measure Spaces}
The measure-theoretic framework of Theorem~\ref{t3} naturally leads to applications involving positive integral operators, probability, and erasure models.
Throughout this section, we write \[ Tg(v)=\int_E K(v,e)g(e)\,d\tau(e), \] where $K:V\times E\to[0,\infty)$ is measurable. In what follows, $T$ will be considered as an operator from $L^p(E,\tau)$ into $L^p(V,\mu)$ for various values of $p$, whenever the corresponding mapping is well defined. We impose, as needed, one or both of the normalization conditions

\begin{equation}\label{2.1}
\int_V K(v,e)\,d\mu(v)=1
\qquad\text{for $\tau$-a.e. }e,
\tag{1}
\end{equation}
and
\begin{equation}\label{2.2}
\int_E K(v,e)\,d\tau(e)=1
\qquad\text{for $\mu$-a.e. }v.
\tag{2}
\end{equation}

\subsection{Positive integral operators}

Condition \eqref{2.1} preserves total mass, while \eqref{2.2} gives
$T1_E=1_V$. When both (1) and (2) hold, we call $K$ and $T$
doubly stochastic. The following result gives two-sided $L^p$-bounds
for such operators.

\begin{thm}\label{thm:doubly-stochastic}
Let $0<\mu(V)<\infty$ and let $T$ be a doubly stochastic integral
operator. Then, for every $1\leq p<\infty$ and every nonnegative
$g\in L^p(E,\tau)$,
\[
\mu(V)^{\frac1p-1}\|g\|_1
\leq
\|Tg\|_p
\leq
\|g\|_p.
\]
Moreover, $T$ is an $L^p$-contraction for arbitrary
$g\in L^p(E,\tau)$. For $p>1$, equality in the lower bound holds if
and only if $Tg=\frac{\|g\|_1}{\mu(V)}
\quad \mu\text{-a.e.}$
\end{thm}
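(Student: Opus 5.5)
The plan is to prove the upper bound via Jensen's inequality applied row by row, and the lower bound by combining Hölder on $V$ with mass preservation from condition (1); the equality case then follows from Theorem~\ref{t3} with $\varphi(t)=t^{p-1}$.

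\emph{Upper bound.} I would use condition (2). For $\mu$-a.e.\ $v$, the measure $K(v,e)\,d\tau(e)$ is a probability measure on $E$. Since $t\mapsto |t|^p$ is convex for $p\ge 1$, Jensen's inequality gives
\[
|Tg(v)|^p\le \int_E K(v,e)|g(e)|^p\,d\tau(e).
\]
Integrating over $V$, using Tonelli (the kernel is nonnegative), and then applying (1) yields
\[
\|Tg\|_p^p\le \int_E |g|^p\,d\tau=\|g\|_p^p .
\]
This argument works for arbitrary real or complex $g$, which gives the contraction claim. It also shows that $Tg(v)$ is finite a.e.

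\emph{Lower bound.} Take $g\ge 0$. By Tonelli and (1),
\[
\int_V Tg\,d\mu=\int_E g\,d\tau=\|g\|_1 .
\]
Hölder's inequality on the finite measure space $V$ gives
\[
\|Tg\|_1\le \mu(V)^{1-1/p}\|Tg\|_p,
\]
which rearranges to the lower bound. If $\|g\|_1=\infty$, the lower bound would force $\|Tg\|_p=\infty$, contradicting the upper bound. So for $g\in L^p$ the quantity $\|g\|_1$ is automatically finite; this is worth remarking, since $g\in L^p(E)$ need not be integrable.

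\emph{Equality case ($p>1$).} I would recast the lower bound as an instance of Theorem~\ref{t3}. Set $M=K$, $wt=g$, $c=1$, $\varphi(t)=t^{p-1}$ on $I=[0,\infty)$, and $f(t)=t^p$, which is strictly convex for $p>1$. Then $\delta=Tg$, $s=\|g\|_1$, and $\bar\delta=\|g\|_1/\mu(V)$. The left side of $(*)$ is
\[
\frac1s\int_V (Tg)^{p-1}\,Tg\,d\mu=\frac{\|Tg\|_p^p}{\|g\|_1}.
\]
So $(*)$ reads $\|Tg\|_p^p\ge \|g\|_1^p\,\mu(V)^{1-p}$, which is exactly the lower bound raised to the $p$-th power. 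The strict-convexity clause of Theorem~\ref{t3} then says equality holds if and only if $Tg=\bar\delta$ a.e. Alternatively, one can use the equality case of Hölder with the constant function, which requires $Tg$ to be a.e.\ constant; by the mass identity that constant must be $\|g\|_1/\mu(V)$.

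\emph{Main obstacle.} The only delicate step is checking the integrability hypotheses (ii), (iii) and (v) of Theorem~\ref{t3}. Condition (iii) is $\int K g=\|g\|_1<\infty$, which was established above. Condition (v) is $\int_V (Tg)^p\,d\mu<\infty$, which follows from the upper bound. The degenerate case $g=0$ a.e., where $s=0$ and (ii) fails, is handled directly: both sides vanish, $Tg=0$, and the equality statement holds trivially. Using Hölder's equality case directly sidesteps these checks, so I would present that route and mention Theorem~\ref{t3} as the conceptual source.
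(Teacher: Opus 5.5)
Your proof is correct. Your upper bound is the same Jensen-plus-Tonelli argument the paper gives.

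The difference is in the lower bound and its equality case. The paper applies Theorem~\ref{t3} with $M=K$, $wt=g$, $\varphi(t)=t^{p-1}$. You instead combine the mass identity $\int_V Tg\,d\mu=\|g\|_1$ with H\"older's inequality against $\mathbf 1_V$, and read off equality from H\"older's equality case. These are the same power-mean inequality on $V$: with $c=1$ and this $\varphi$, Theorem~\ref{t3} reduces to Jensen's inequality for $t^p$ with respect to $\mu/\mu(V)$.

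What each route buys:
\begin{itemize}
\item The paper's route mainly shows the result as an instance of its framework.
\item Your route is self-contained and more careful. It treats $g=0$ a.e., where $s=0$ and hypothesis (ii) of Theorem~\ref{t3} fails; the paper's proof passes over this case. It also justifies $\|g\|_1<\infty$, which the paper uses silently.
\end{itemize}

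For the finiteness of $\|g\|_1$ there is a quicker reason than your contradiction argument. Integrating $K$ over $V\times E$ in both orders and using \eqref{2.1} and \eqref{2.2} gives $\tau(E)=\mu(V)<\infty$. Hence $L^p(E,\tau)\subseteq L^1(E,\tau)$ automatically.

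The paper's proof also records the $p=\infty$ contraction. It follows at once from \eqref{2.2}, since $|Tg(v)|\le\|g\|_\infty\int_E K(v,e)\,d\tau(e)=\|g\|_\infty$. Add that line if you read the ``moreover'' clause as covering $p=\infty$.
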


\begin{proof}
Let $1\leq p<\infty$. Since $t\mapsto |t|^p$ is convex and, by
\eqref{2.2}, $\int_E K(v,e)\,d\tau(e)=1,\,$
for $\mu$-a.e.\ $v\in V$, Jensen's inequality gives
\[
|Tg(v)|^p
=
\left|\int_E K(v,e)g(e)\,d\tau(e)\right|^p
\leq
\int_E K(v,e)|g(e)|^p\,d\tau(e).
\]
Integrating over $V$ and applying Fubini's theorem and \eqref{2.1}, we obtain
\[
\begin{aligned}
\|Tg\|_p^p \leq
\int_E |g(e)|^p
\left(\int_V K(v,e)\,d\mu(v)\right)d\tau(e) =\int_E|g(e)|^p\,d\tau(e)
=\|g\|_p^p.
\end{aligned}
\]
Hence $\|Tg\|_p\leq\|g\|_p$. For $p=\infty$, \eqref{2.2} gives
\[
|Tg(v)|
\leq
\|g\|_\infty\int_EK(v,e)\,d\tau(e)
=\|g\|_\infty,
\]
so $T$ is an $L^p$-contraction for every $1\leq p\leq\infty$.

Now let $g\geq0$ and $1\leq p<\infty$. Applying
Theorem~\ref{t3} with $M=K,\quad wt=g,\quad \varphi(t)=t^{p-1},$
we have $c=1$, $\delta=Tg$, and
$\bar{\delta}=\|g\|_1/\mu(V)$. Therefore,
\[
\|Tg\|_p
\geq
\mu(V)^{\frac1p-1}\|g\|_1.
\]
For $p>1$, strict convexity of $t^p$ shows that equality holds
precisely when $Tg=\frac{\|g\|_1}{\mu(V)}
\quad \mu\text{-a.e.}$

\end{proof}

\begin{rem}
If, in addition, $\mu(V)=1$, then Theorem~2.1 reduces to
\[
\|g\|_1\leq \|Tg\|_p\leq \|g\|_p,
\qquad g\geq0.
\]
\end{rem}

The mass-preserving condition \eqref{2.1} also yields an operator-order
estimate on $L^2$. For $u\in L^2(E,\tau)$, we use
$u\otimes u$ to denote the rank-one operator $(u\otimes u)g=\langle g,u\rangle u.$
The following result gives a lower bound for $T^*T$ in the Loewner
order.

\begin{prop}\label{prop:TstarT}
Let $(V,\mu)$ and $(E,\tau)$ be finite positive measure spaces, and
let $T:L^2(E,\tau)\to L^2(V,\mu)$ be a bounded integral operator
satisfying \eqref{2.1}. Then
\[
T^*\mathbf 1_V=\mathbf 1_E
\quad\text{and}\quad
T^*T\geq
\frac{1}{\mu(V)}
(\mathbf 1_E\otimes\mathbf 1_E).
\]
Equivalently, for every $g\in L^2(E,\tau)$,
\[
\|Tg\|_2^2
\geq
\frac{1}{\mu(V)}
\left|\int_E g\,d\tau\right|^2.
\]
\end{prop}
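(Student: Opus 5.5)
The plan is to compute the adjoint directly, then obtain the operator inequality from Cauchy--Schwarz on $V$ combined with the mass-preservation identity. The equality $\int_V Tg\,d\mu=\int_E g\,d\tau$ is what links the two sides.

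\emph{Adjoint.} Since $T$ is bounded with nonnegative kernel $K$, the adjoint is the integral operator with transposed kernel. For $h\in L^2(V,\mu)$ and $g\in L^2(E,\tau)$,
\[
\langle Tg,h\rangle=\int_V\!\int_E K(v,e)g(e)\overline{h(v)}\,d\tau(e)\,d\mu(v),
\]
so that, after Fubini,
\[
T^*h(e)=\int_V K(v,e)h(v)\,d\mu(v).
\]
Fubini is legitimate for nonnegative $g$ and $h$, because the double integral equals $\langle Tg,h\rangle$, which is finite. The general case follows by splitting $g$ and $h$ into positive and negative (and real and imaginary) parts. Taking $h=\mathbf 1_V$, which lies in $L^2$ because $\mu(V)<\infty$, condition \eqref{2.1} gives $T^*\mathbf 1_V(e)=1$ for $\tau$-a.e.\ $e$, that is, $T^*\mathbf 1_V=\mathbf 1_E$. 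Here $\mathbf 1_E\in L^2$ since $\tau(E)<\infty$.

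\emph{Lower bound.} For $g\in L^2(E,\tau)$ we have
\[
\int_V Tg\,d\mu=\langle Tg,\mathbf 1_V\rangle=\langle g,T^*\mathbf 1_V\rangle=\int_E g\,d\tau.
\]
Cauchy--Schwarz on $V$ then gives
\[
\Big|\int_E g\,d\tau\Big|^2=\Big|\int_V Tg\cdot \mathbf 1_V\,d\mu\Big|^2\le \mu(V)\,\|Tg\|_2^2,
\]
which is the stated inequality. For nonnegative $g$ it can alternatively be read off from Theorem~\ref{t3} with $M=K$, $wt=g$, $\varphi(t)=t$ and $c=1$, exactly as in the proof of Theorem~\ref{thm:doubly-stochastic} with $p=2$. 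Cauchy--Schwarz, however, handles arbitrary (including complex) $g$ directly.

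\emph{Loewner form.} Note that
\[
\langle T^*Tg,g\rangle=\|Tg\|_2^2
\quad\text{and}\quad
\langle (\mathbf 1_E\otimes\mathbf 1_E)g,g\rangle=|\langle g,\mathbf 1_E\rangle|^2=\Big|\int_E g\,d\tau\Big|^2 .
\]
The quadratic-form inequality is therefore exactly $T^*T-\mu(V)^{-1}\mathbf 1_E\otimes\mathbf 1_E\ge0$, which gives the claimed equivalence.

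The only delicate point is justifying Fubini in the adjoint computation for general $L^2$ functions. I expect to handle it with the positivity of $K$ and the boundedness of $T$, as above, rather than assuming any integrability of $K$ itself.
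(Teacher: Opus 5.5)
Your proposal is correct and follows essentially the paper's argument: Fubini with \eqref{2.1} gives $\langle Tg,\mathbf 1_V\rangle=\int_E g\,d\tau=\langle g,\mathbf 1_E\rangle$, hence $T^*\mathbf 1_V=\mathbf 1_E$, and Cauchy--Schwarz on $V$ then gives the bound and its Loewner-order form. The only difference is that you first identify the full adjoint kernel, whereas the paper reads off $T^*\mathbf 1_V=\mathbf 1_E$ directly from the pairing identity, which needs Fubini only against $\mathbf 1_V$.
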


\begin{proof}
By Fubini's theorem and the normalization of $K$,
\[
\langle Tg,\mathbf 1_V\rangle
=
\int_Eg(e)\,d\tau(e)
=
\langle g,\mathbf 1_E\rangle,
\]
and hence $T^*\mathbf 1_V=\mathbf 1_E$. Therefore, by the
Cauchy--Schwarz inequality,
\[
|\langle g,\mathbf 1_E\rangle|^2
=
|\langle Tg,\mathbf 1_V\rangle|^2
\leq
\mu(V)\|Tg\|_2^2.
\]
Thus
\[
\|Tg\|_2^2
\geq
\frac{1}{\mu(V)}
|\langle g,\mathbf 1_E\rangle|^2
=
\frac{1}{\mu(V)}
\langle(\mathbf 1_E\otimes\mathbf 1_E)g,g\rangle,
\]
which is equivalent to
\[
T^*T\geq
\frac{1}{\mu(V)}
(\mathbf 1_E\otimes\mathbf 1_E).
\]
\end{proof}

\subsection{Application to Fej\'er means}
We now apply Theorem~\ref{t3} to the Fej\'er kernel, a classical
positive kernel in harmonic analysis. Let
$\mathbb{T}=\mathbb{R}/2\pi\mathbb{Z}$ be equipped with normalized
Lebesgue measure $dm(\theta)=d\theta/(2\pi)$, and let $F_N$ denote
the Fej\'er kernel. For $g\in L^1(\mathbb{T})$, define its $N$th
Fej\'er mean by
\[
\sigma_Ng(\theta)
=\int_{\mathbb{T}}F_N(\theta-\varphi)g(\varphi)\,dm(\varphi).
\]
The positivity and normalization of $F_N$ yield the following norm
and entropy estimates.

\begin{prop}
\label{prop:fejer}
Let $g\in L^1(\mathbb{T})$ be nonnegative and set $s=\int_{\mathbb{T}}g\,dm>0.$
Then, for every $1\leq p<\infty$, $\|\sigma_Ng\|_p\geq s.$ Moreover, if $\sigma_Ng>0$ a.e., then $\frac1s\int_{\mathbb{T}}\sigma_Ng
\log\left(\frac{\sigma_Ng}{s}\right)\,dm\geq0.$
For $p>1$, equality in the first inequality holds if and only if
$\sigma_Ng=s$ a.e.; the same equality condition holds for the
entropy inequality.
\end{prop}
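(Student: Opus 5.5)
The plan is to apply Theorem~\ref{t3} with $V=E=\mathbb{T}$, $\mu=\tau=m$, and kernel $M(\theta,\varphi)=F_N(\theta-\varphi)$. The Fej\'er kernel is nonnegative and has $\int_{\mathbb{T}}F_N\,dm=1$. By translation invariance of $m$, $M$ then satisfies both \eqref{2.1} and \eqref{2.2}, so $c=1$ and $\sigma_N$ is a doubly stochastic integral operator.

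\textbf{Norm inequality.} Here one could simply invoke Theorem~\ref{thm:doubly-stochastic} with $\mu(\mathbb{T})=1$; this gives $\|\sigma_Ng\|_p\geq\|g\|_1=s$. The equality statement for $p>1$ also comes from that theorem: $\sigma_Ng=\|g\|_1/\mu(V)=s$ a.e. A small point is that Theorem~\ref{thm:doubly-stochastic} asks for $g\in L^p$. For merely $L^1$ data, I would instead apply Theorem~\ref{t3} directly with $wt=g$ and $\varphi(t)=t^{p-1}$ on $I=[0,\infty)$. If $\|\sigma_Ng\|_p=\infty$ the inequality is trivial. Otherwise, Fubini together with $\int_{\mathbb{T}}F_N(\theta-\varphi)\,dm(\theta)=1$ shows that $\bar\delta=\int\sigma_Ng\,dm=s$, and hypothesis (v) reduces to $\int(\sigma_Ng)^p\,dm<\infty$. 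In fact $\sigma_Ng$ is a trigonometric polynomial, so it is bounded and all integrability conditions hold automatically. The inequality $(*)$ then reads $\frac1s\int(\sigma_Ng)^{p}\,dm\geq s^{p-1}$, i.e.\ $\|\sigma_Ng\|_p\geq s$.

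\textbf{Entropy inequality.} Take $\varphi(t)=\log t$ on $I=(0,\infty)$, so that $f(t)=t\log t$ is strictly convex. The assumption $\sigma_Ng>0$ a.e.\ is exactly what guarantees $\delta(\theta)\in I$ a.e. Since $\sigma_Ng$ is bounded, the function $|\log\sigma_Ng|\,\sigma_Ng$ is bounded as well, because $t|\log t|$ is bounded on bounded subsets of $(0,\infty)$; this gives condition (v). Theorem~\ref{t3} then yields
\[
\frac1s\int_{\mathbb{T}}\sigma_Ng\,\log(\sigma_Ng)\,dm\geq\log s .
\]
Subtracting $\frac1s\int\sigma_Ng\,\log s\,dm=\log s$, which uses $\int\sigma_Ng\,dm=s$, gives the stated inequality. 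Strict convexity of $t\log t$, combined with the equality clause of Theorem~\ref{t3}, gives equality if and only if $\sigma_Ng=\bar\delta=s$ a.e.

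\textbf{Main obstacle.} The only delicate step is checking the integrability hypotheses (iii) and (v) of Theorem~\ref{t3} for $g\in L^1$, particularly for the logarithm near $0$. I expect to handle this entirely through the observation that $\sigma_Ng$ is a trigonometric polynomial of degree $<N$, hence continuous and bounded. Condition (iii) is immediate from $\int\!\!\int F_N\,g=s<\infty$.
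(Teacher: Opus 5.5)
Your proposal is correct and follows the paper's proof: both apply Theorem~\ref{t3} with $V=E=\mathbb{T}$, $M(\theta,\varphi)=F_N(\theta-\varphi)$, $wt=g$, $c=1$ and $\bar\delta=s$, taking $\varphi(t)=t^{p-1}$ for the norm bound and a logarithm for the entropy bound. The only differences are minor: the paper uses $\log(t/s)$ directly where you use $\log t$ and then subtract $\log s$, and your check of hypotheses (iii) and (v) via boundedness of the trigonometric polynomial $\sigma_Ng$ supplies a detail the paper omits (under the paper's normalization $\sigma_Ng$ has degree at most $N$ rather than $<N$, which does not affect the argument).
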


\begin{proof}
Since $F_N\geq0$ and $\int_{\mathbb{T}}F_N(\theta-\varphi)\,dm(\theta)=1,$ apply Theorem~\ref{t3} with
\[
V=E=\mathbb{T},\qquad
M(\theta,\varphi)=F_N(\theta-\varphi),\qquad
wt(\varphi)=g(\varphi).
\]
Then $c=1$, $\delta(\theta)=\sigma_Ng(\theta)$, and, since
$m(\mathbb{T})=1$, Fubini's theorem gives
\[
\bar\delta=\int_{\mathbb{T}}\sigma_Ng\,dm
=\int_{\mathbb{T}}g\,dm=s.
\]
Taking $\varphi(t)=t^{p-1}$ in Theorem~\ref{t3} yields
\[
\frac1s\int_{\mathbb{T}}(\sigma_Ng)^p\,dm\geq s^{p-1},
\]
and hence $\|\sigma_Ng\|_p\geq s$. For the second assertion, take
$\varphi(t)=\log(t/s)$; since $t\log(t/s)$ is strictly convex on
$(0,\infty)$,
\[
\frac1s\int_{\mathbb{T}}\sigma_Ng
\log\left(\frac{\sigma_Ng}{s}\right)\,dm\geq0.
\]
The equality statements follow directly from the strict convexity
condition in Theorem~\ref{t3}.
\end{proof}

\begin{example}\label{ex:fejer}
Let $g(\theta)=1+a\cos(k\theta),\quad |a|<1,\quad k\in\mathbb{N}.$
Then $g>0$ and $\int_{\mathbb{T}}g\,dm=1$. Since the Fej\'er mean
multiplies the $k$th Fourier mode by
$1-\frac{k}{N+1}$ for $k\leq N$, we have
\[
\sigma_Ng(\theta)=
1+a\left(1-\frac{k}{N+1}\right)\cos(k\theta),
\qquad 1\leq k\leq N.
\]
Hence, by Theorem~\ref{prop:fejer}, for every $p\geq1$,
\[
\left[
\int_{\mathbb{T}}
\left(
1+a\left(1-\frac{k}{N+1}\right)\cos(k\theta)
\right)^p dm(\theta)
\right]^{1/p}\geq1,
\]
and
\[
\int_{\mathbb{T}}\sigma_Ng(\theta)
\log\bigl(\sigma_Ng(\theta)\bigr)\,dm(\theta)\geq0.
\]
For $p>1$, the first inequality is strict whenever $a\neq0$ and
$k\leq N$. If $k>N$, then $\sigma_Ng=1$, so equality holds in both
inequalities. Thus, equality corresponds to the Fej\'er mean
eliminating the nonconstant Fourier mode.
\end{example}

\subsection{Applications to probability and entropy}

The measure-theoretic framework of Theorem~\ref{t3} also admits a
natural probabilistic interpretation. After normalization, the
generalized degree function $\delta$ defines a probability density,
and Theorem~\ref{t3} yields moment and relative entropy inequalities.
We record the resulting estimates below.

\begin{prop}
\label{cor:probability-moment}
Assume the hypotheses of Theorem~\ref{t3} with
$\mu(V)=1$ and $cs=1$. Suppose that $\delta(v)\geq0$ for
$\mu$-almost every $v\in V$. Then, for every $p\geq1$,
\[
\int_V \delta(v)^p\,d\mu(v)\geq 1.
\]
If $p>1$, equality holds if and only if $\delta(v)=1$ for $\mu$-almost every $v\in V$.
\end{prop}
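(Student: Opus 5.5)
The plan is to apply Theorem~\ref{t3} directly with $\varphi(t)=t^{p-1}$ on $I=[0,\infty)$. The first step is to compute $\bar\delta$. Since $\mu(V)=1$, we have $\bar\delta=\int_V\delta\,d\mu$. By hypothesis (iii), Fubini's theorem applies, and the column condition $\int_V M(v,e)\,d\mu(v)=c$ then gives
\[
\bar\delta=\int_E\left(\int_V M(v,e)\,d\mu(v)\right)wt(e)\,d\tau(e)=c\int_E wt\,d\tau=cs=1 .
\]
Note also that $c=1/s>0$.

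Next we check that $f(t)=t\varphi(t)=t^p$ is convex on $I$ for $p\geq1$, and strictly convex for $p>1$. Since $\delta\geq0$ a.e., condition (iv) holds with $I=[0,\infty)$. Hypothesis (v) is integrability of $|\delta|^{p-1}|M||wt|$, and it is included in "assume the hypotheses of Theorem~\ref{t3}". The case $p=1$ is trivial: $\int_V\delta\,d\mu=\bar\delta=1$ directly. Theorem~\ref{t3} then yields
\[
\frac1s\int_{V\times E}\delta(v)^{p-1}M(v,e)\,wt(e)\,d(\mu\times\tau)\geq c\,\bar\delta^{\,p-1}=c .
\]

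The main step, and the only real subtlety, is to rewrite the left side as a single integral over $V$. Using (v) to justify Fubini, we integrate first in $e$ to get $\int_E M(v,e)wt(e)\,d\tau(e)=\delta(v)$. The left side therefore equals $\frac1s\int_V\delta^p\,d\mu$. Multiplying by $s>0$ gives $\int_V\delta^p\,d\mu\geq cs=1$.

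For equality when $p>1$, $t^p$ is strictly convex on $[0,\infty)$. The equality clause of Theorem~\ref{t3} then says equality holds if and only if $\delta=\bar\delta=1$ $\mu$-a.e. Conversely, if $\delta=1$ a.e., then $\int_V\delta^p\,d\mu=\mu(V)=1$.

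The one point to double-check is that $\delta\geq0$ is genuinely needed: it makes $t^{p-1}$ well defined and gives convexity of $t^p$ on an interval containing the range of $\delta$, for non-integer $p$. Since $\delta$ is only assumed real in Theorem~\ref{t3}, this is the key use of that hypothesis.
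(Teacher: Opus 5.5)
Your proposal is correct and follows essentially the same route as the paper: apply Theorem~\ref{t3} with $\varphi(t)=t^{p-1}$, use $\mu(V)=1$ and $cs=1$ to get $\bar\delta=1$, collapse the double integral to $\frac1s\int_V\delta^p\,d\mu$ via the definition of $\delta$, multiply by $s$, and read off the equality case from the strict convexity of $t^p$ for $p>1$. Your separate handling of $p=1$ (which sidesteps $0^0$) and your explicit Fubini justifications are extra care that the paper leaves implicit.
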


\begin{proof}
Since $\mu(V)=1$ and $cs=1$, we have
\[
\overline{\delta}
=
\int_V\delta(v)\,d\mu(v)
=
1.
\]
Choose $\phi(t)=t^{p-1}.$ Then $f(t)=t\phi(t)=t^p$
is convex on $[0,\infty)$ for $p\geq1$. By
Theorem~\ref{t3},
\[
\frac{1}{s}
\int_{V\times E}
\delta(v)^{p-1}M(v,e)\operatorname{wt}(e)
\,d(\mu\times\tau)
\geq c.
\]
Using $\delta(v)
=
\int_E M(v,e)\operatorname{wt}(e)\,d\tau(e),$ the left-hand side becomes $\frac{1}{s}\int_V\delta(v)^p\,d\mu(v).$ Since $cs=1$, multiplication by $s$ gives
\[
\int_V\delta(v)^p\,d\mu(v)\geq1.
\]
For $p>1$, strict convexity of $t^p$ gives the stated equality
condition.
\end{proof}

\begin{cor}
\label{prop:probability-entropy}
Assume the hypotheses of Theorem~\ref{t3}, with $\delta>0$ a.e., and define $q(v)=\frac{\delta(v)}{cs},
\quad u(v)=\frac{1}{\mu(V)}.$ Then $q$ and $u$ are probability densities on $(V,\mu)$, and
\[
D(q\|u):=\int_Vq(v)\log\frac{q(v)}{u(v)}\,d\mu(v)\geq0.
\]
Moreover, equality holds if and only if $q=u$ a.e., equivalently, $\delta(v)=\frac{cs}{\mu(V)}
\quad\text{for $\mu$-a.e. }v\in V.$

\end{cor}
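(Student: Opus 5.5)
Plan: apply Theorem~\ref{t3} with $\varphi(t)=\log(t/\bar\delta)$, after a preliminary computation of $\int_V\delta\,d\mu$.

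\textbf{Step 1: $q$ and $u$ are densities.} By Fubini's theorem, justified by hypothesis (iii), and by the column condition in (iv),
\[
\int_V\delta\,d\mu=\int_E wt(e)\left(\int_V M(v,e)\,d\mu(v)\right)d\tau(e)=cs .
\]
Since $\delta>0$ a.e.\ and $s>0$, this integral is positive, so $c>0$ and $cs>0$. Hence $q=\delta/(cs)$ is a nonnegative function with $\int_V q\,d\mu=1$. Also $u=1/\mu(V)$ is a density by hypothesis (i). It follows that $\bar\delta=cs/\mu(V)$ and $q/u=\delta/\bar\delta$.

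\textbf{Step 2: the inequality.} Take $I=(0,\infty)$ and $\varphi(t)=\log(t/\bar\delta)$. Then $f(t)=t\log(t/\bar\delta)$ is strictly convex on $I$, since $f''(t)=1/t>0$. I will assume integrability hypothesis (v) for this $\varphi$, as the statement implicitly does. Apply Theorem~\ref{t3} and integrate out $e$ using the definition of $\delta$:
\[
\frac1s\int_V\delta\log\frac{\delta}{\bar\delta}\,d\mu\ \ge\ c\log 1=0 .
\]
Dividing by $c>0$ gives
\[
\frac{1}{cs}\int_V\delta\log\frac{\delta}{\bar\delta}\,d\mu=\int_V q\log\frac{q}{u}\,d\mu=D(q\|u)\ge0 .
\]

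\textbf{Step 3: equality.} By the strict-convexity clause of Theorem~\ref{t3}, equality holds if and only if $\delta=\bar\delta=cs/\mu(V)$ $\mu$-a.e. This is equivalent to $q=u$ a.e.

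\textbf{Main obstacle.} The only delicate point is justifying hypothesis (v) for the logarithm, which is needed for the integrals to be finite. If (v) is not taken as part of the hypotheses, I would handle it directly. The negative part of $q\log(q/u)$ is bounded by $u/e$, and $u$ is integrable because $\mu(V)<\infty$. So $D(q\|u)$ is well defined in $(-\infty,\infty]$, and the inequality is trivial when $D(q\|u)=\infty$.
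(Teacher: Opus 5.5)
Your proof is correct and follows the paper's argument. The paper likewise applies Theorem~\ref{t3} with $\varphi(t)=\log\bigl(\mu(V)t/(cs)\bigr)$, which is your $\log(t/\bar\delta)$, then divides by $c$ and reads off the equality case from strict convexity; your explicit check that $cs=\int_V\delta\,d\mu>0$, so that $c>0$ and the division preserves the inequality, is a detail the paper leaves implicit.
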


\begin{proof}
Since $\int_V\delta\,d\mu=cs$, both $q$ and $u$ integrate to one.
Taking $\varphi(t)=\log\left(\frac{\mu(V)t}{cs}\right)$
in Theorem~\ref{t3}, where
$t\varphi(t)=t\log\!\left(\frac{\mu(V)t}{cs}\right)$ is strictly
convex, and using $\bar\delta=cs/\mu(V)$, we obtain
\[
\frac1s\int_V\delta(v)
\log\left(\frac{\mu(V)\delta(v)}{cs}\right)d\mu(v)\geq0.
\]
Since $q/u=\mu(V)\delta/(cs)$, division by $c$ gives
$D(q\|u)\geq0$. The equality condition in Theorem~\ref{t3} yields
$\delta=\bar\delta=cs/\mu(V)$ a.e., equivalently $q=u$ a.e.
\end{proof}

We next obtain a quantitative refinement in terms of variance. Define the
probability measure $\mathbb{P}$ on $V$ by
\[
d\mathbb{P}(v)=\frac{1}{\mu(V)}\,d\mu(v),
\]
and consider the random variable $X(v)=\delta(v).$
Then
\[
\mathbb{E}_{\mathbb{P}}[X]
=\frac{1}{\mu(V)}\int_V\delta(v)\,d\mu(v)
=\bar{\delta}
=\frac{cs}{\mu(V)},
\]
and
\[
\operatorname{Var}_{\mathbb{P}}(X)
=\frac{1}{\mu(V)}
\int_V\bigl(\delta(v)-\bar{\delta}\bigr)^2\,d\mu(v).
\]

\begin{prop}
\label{prop:variance-gap}
Assume the hypotheses of Theorem~\ref{t3}. Suppose that
$f(t)=t\phi(t)$ is twice continuously differentiable on an interval
containing the range of $\delta$ and that
\[
f''(t)\geq\alpha>0.
\]
Then
\[
\frac{1}{s}\int_{V\times E}
\phi(\delta(v))M(v,e)wt(e)\,d(\mu\times\tau)
-c\phi(\bar{\delta})
\geq
\frac{\alpha\mu(V)}{2s}
\operatorname{Var}_{\mathbb{P}}(X).
\]
\end{prop}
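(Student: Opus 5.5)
The plan is to reduce the left-hand side to an integral over $V$ alone, rewrite the difference as a Bregman-type remainder of $f$, and then bound that remainder pointwise using the lower bound $f''\geq\alpha$. In effect this is a quantitative version of the proof of Theorem~\ref{t3}, in which strong convexity replaces plain convexity.

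\emph{Step 1 (collapse the $E$-integral).} Hypotheses (iii) and (v) of Theorem~\ref{t3} make $\phi(\delta(v))M(v,e)wt(e)$ integrable on $V\times E$, so Fubini's theorem applies. Integrating first in $e$ and using the definition of $\delta$ gives
\[
\frac1s\int_{V\times E}\phi(\delta(v))M(v,e)wt(e)\,d(\mu\times\tau)=\frac1s\int_V\phi(\delta(v))\delta(v)\,d\mu(v)=\frac1s\int_V f(\delta(v))\,d\mu(v).
\]
Similarly, Fubini together with the column condition $\int_V M\,d\mu=c$ gives $\int_V\delta\,d\mu=cs$, so $\bar\delta=cs/\mu(V)$. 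Hence
\[
c\,\phi(\bar\delta)=\frac{\mu(V)}{s}\,\bar\delta\,\phi(\bar\delta)=\frac{\mu(V)}{s}f(\bar\delta),
\]
and this identity requires no division by $\bar\delta$. The left-hand side of the claim is therefore
\[
\frac1s\Bigl(\int_V f(\delta)\,d\mu-\mu(V)f(\bar\delta)\Bigr).
\]

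\emph{Step 2 (insert the tangent line).} Since $\int_V(\delta-\bar\delta)\,d\mu=0$, I can subtract $f'(\bar\delta)(\delta(v)-\bar\delta)$ inside the integral without changing its value. The expression becomes
\[
\frac1s\int_V\Bigl[f(\delta(v))-f(\bar\delta)-f'(\bar\delta)\bigl(\delta(v)-\bar\delta\bigr)\Bigr]\,d\mu(v).
\]
For this I need $\bar\delta$ to lie in the interval on which $f$ is $C^2$. This holds because that interval contains the range of $\delta$, and the average of an interval-valued function lies in the interval (its closure, with the endpoint case excluded since $\mu(V)>0$ and $\delta$ takes values in the interval).

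\emph{Step 3 (pointwise Taylor bound).} By Taylor's theorem with Lagrange remainder, for each $v$ the bracket equals $\tfrac12 f''(\xi_v)(\delta(v)-\bar\delta)^2$ for some $\xi_v$ between $\delta(v)$ and $\bar\delta$. This is at least $\tfrac{\alpha}{2}(\delta(v)-\bar\delta)^2$. Integrating this nonnegative bound, which remains valid even if the variance is infinite, and dividing by $s>0$ gives
\[
\frac{\alpha}{2s}\int_V(\delta-\bar\delta)^2\,d\mu=\frac{\alpha\mu(V)}{2s}\operatorname{Var}_{\mathbb P}(X),
\]
which is the claim.

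The main point requiring care is Step 1: justifying the interchange and splitting of the integrals. Specifically, one must check that $\int_V f(\delta)\,d\mu$ is finite, which follows from hypothesis (v), and that $\int_V\delta\,d\mu$ is finite, which follows from (iii). This ensures that subtracting the linear term in Step 2 is legitimate and not an $\infty-\infty$ manipulation. Once these integrability facts are in place, the rest is the standard strong-convexity argument.
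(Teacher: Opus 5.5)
Your proposal is correct and follows essentially the same route as the paper: you collapse the $E$-integral to $\int_V f(\delta)\,d\mu$, identify $c\phi(\bar\delta)=\frac{\mu(V)}{s}f(\bar\delta)$, and integrate the strong-convexity bound $f(t)\ge f(\bar\delta)+f'(\bar\delta)(t-\bar\delta)+\frac{\alpha}{2}(t-\bar\delta)^2$, so that the linear term vanishes. Your additional checks (Fubini via (iii) and (v), finiteness of $\int_V f(\delta)\,d\mu$ and $\int_V\delta\,d\mu$, and $\bar\delta$ lying in the interval where $f$ is $C^2$) supply details the paper leaves implicit.
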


\begin{proof}
Since $f''\geq\alpha$, we have
\[
f(t)\geq
f(\bar{\delta})
+f'(\bar{\delta})(t-\bar{\delta})
+\frac{\alpha}{2}(t-\bar{\delta})^2.
\]
Taking $t=\delta(v)$ and integrating over $V$, the linear term vanishes
because $\int_V(\delta-\bar{\delta})\,d\mu=0.$
Hence
\[
\frac{1}{s}\int_V f(\delta)\,d\mu
-\frac{\mu(V)}{s}f(\bar{\delta})
\geq
\frac{\alpha}{2s}
\int_V(\delta-\bar{\delta})^2\,d\mu.
\]
Now, by the definition of $\delta$,
\[
\int_V f(\delta)\,d\mu
=
\int_{V\times E}
\phi(\delta(v))M(v,e)wt(e)\,d(\mu\times\tau),
\]
while $\frac{\mu(V)}{s}f(\bar{\delta})
=c\phi(\bar{\delta}).$
The result follows from
\[
\int_V(\delta-\bar{\delta})^2\,d\mu
=
\mu(V)\operatorname{Var}_{\mathbb{P}}(X).
\]
\end{proof}

\subsection{Applications to erasures and robustness}
We first consider independent random erasures. The following result
decomposes the expected reconstruction error into the squared error
caused by the mean erasure pattern and a fluctuation term due to
randomness.

Let $K_1,\ldots,K_N\in L^2(V,\mu)$ and
$g=(g_1,\ldots,g_N)\in\mathbb R^N$, and write
$Tg=\sum_{j=1}^N g_jK_j$. If the $j$th coefficient is independently
erased with probability $p_j$, let $\varepsilon_j$ be its erasure
indicator, so that $\mathbb P(\varepsilon_j=1)=p_j$ and
$\mathbb P(\varepsilon_j=0)=1-p_j$. The corresponding random erasure
error is $\mathcal E=\sum_{j=1}^N\varepsilon_jg_jK_j$.

\begin{thm} \label{thm:random-erasure-error}
Let $\mathcal E=\sum_{j=1}^N \varepsilon_j g_jK_j$, where the
$\varepsilon_j$ are independent Bernoulli random variables satisfying
$\mathbb P(\varepsilon_j=1)=p_j$ and
$\mathbb P(\varepsilon_j=0)=1-p_j$. Then
\[
\mathbb E\|\mathcal E\|_2^2
=
\left\|\sum_{j=1}^N p_jg_jK_j\right\|_2^2
+
\sum_{j=1}^N p_j(1-p_j)|g_j|^2\|K_j\|_2^2.
\]
\end{thm}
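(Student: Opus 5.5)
This is a bias–variance decomposition in the Hilbert space $L^2(V,\mu)$, and I would prove it by expanding the square directly. The idea is to write $\mathcal E=\mathbb E\mathcal E+(\mathcal E-\mathbb E\mathcal E)$, where pointwise in $v$
\[
\mathbb E\mathcal E=\sum_{j=1}^N p_jg_jK_j ,
\qquad
\mathcal E-\mathbb E\mathcal E=\sum_{j=1}^N(\varepsilon_j-p_j)g_jK_j .
\]
Because $N$ is finite and each $K_j\in L^2(V,\mu)$, all sums are finite, so we may freely take expectations under integrals. Concretely, Tonelli/Fubini applies: $\varepsilon$ takes only $2^N$ values, so $\mathbb E$ is a finite weighted sum.

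First I expand
\[
\|\mathcal E\|_2^2=\sum_{j,k=1}^N\varepsilon_j\varepsilon_k\,g_jg_k\langle K_j,K_k\rangle
\]
and take expectations term by term. By independence, $\mathbb E[\varepsilon_j\varepsilon_k]=p_jp_k$ for $j\neq k$. Since $\varepsilon_j^2=\varepsilon_j$, we have $\mathbb E[\varepsilon_j^2]=p_j=p_j^2+p_j(1-p_j)$. Therefore
\[
\mathbb E\|\mathcal E\|_2^2=\sum_{j,k}p_jp_k\,g_jg_k\langle K_j,K_k\rangle+\sum_j p_j(1-p_j)g_j^2\|K_j\|_2^2 .
\]
The first sum is exactly $\bigl\|\sum_j p_jg_jK_j\bigr\|_2^2$, which gives the claimed identity; note $|g_j|^2=g_j^2$ because the $g_j$ are real.

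Equivalently, in the centered form, the cross term $2\,\mathbb E\langle \mathbb E\mathcal E,\mathcal E-\mathbb E\mathcal E\rangle$ vanishes because $\mathbb E[\varepsilon_j-p_j]=0$. The fluctuation term then has only diagonal contributions $\operatorname{Var}(\varepsilon_j)=p_j(1-p_j)$, since independent centered variables are uncorrelated.

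There is no real obstacle here. The only points needing care are justifying the interchange of $\mathbb E$ with the $L^2$ inner product, which is immediate because $\mathbb E$ is a finite sum, and using $\varepsilon_j^2=\varepsilon_j$ for the diagonal terms. If the $K_j$ were complex-valued, I would run the same computation with $\operatorname{Re}$ on the cross terms, and it would give the same result.
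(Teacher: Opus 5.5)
Your proposal is correct and is essentially the paper's own argument. The paper writes $\varepsilon_j=p_j+\xi_j$ with $\mathbb E\xi_j=0$ and $\mathbb E\xi_j^2=p_j(1-p_j)$, kills the cross term, and uses $\mathbb E(\xi_i\xi_j)=0$ for $i\neq j$; this is exactly your ``centered form.'' Your primary route, expanding $\sum_{j,k}\varepsilon_j\varepsilon_k g_jg_k\langle K_j,K_k\rangle$ with $\mathbb E[\varepsilon_j^2]=p_j^2+p_j(1-p_j)$, is the same computation done without centering. Your remark that $\mathbb E$ is a finite weighted sum, so it may be interchanged with the $L^2$ integral, is a detail the paper leaves implicit.
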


\begin{proof}
Write $\varepsilon_j=p_j+\xi_j$, where
$\mathbb E\xi_j=0$ and
$\mathbb E\xi_j^2=p_j(1-p_j)$. Then
\[
\mathcal E
=
\sum_{j=1}^Np_jg_jK_j
+
\sum_{j=1}^N\xi_jg_jK_j.
\]
Hence
\[
\mathbb E\|\mathcal E\|_2^2
=
\left\|\sum_{j=1}^Np_jg_jK_j\right\|_2^2
+
\mathbb E\left\|\sum_{j=1}^N\xi_jg_jK_j\right\|_2^2,
\]
because the mixed term has expectation zero. By independence,
$\mathbb E(\xi_i\xi_j)=0$ for $i\neq j$, and therefore
\[
\mathbb E\left\|\sum_{j=1}^N\xi_jg_jK_j\right\|_2^2
=
\sum_{j=1}^Np_j(1-p_j)|g_j|^2\|K_j\|_2^2.
\]
Combining the two identities proves the result.
\end{proof}

We next consider deterministic erasures. The following result shows
that the $L^p$-norm of the post-erasure output is bounded below solely
in terms of the mass retained after erasure.

Let $(V,\mu)$ and $(E,\tau)$ be positive measure spaces with
$0<\mu(V)<\infty$, and let
$Tg(v)=\int_EK(v,e)g(e)\,d\tau(e)$, where $K:V\times E\to[0,\infty)$
is measurable and satisfies $\int_VK(v,e)\,d\mu(v)=1$ for
$\tau$-a.e.\ $e\in E$. For an integrable $g:E\to[0,\infty)$ and a
measurable erased set $\Lambda\subseteq E$, set
$E_\Lambda=E\setminus\Lambda$, $s_\Lambda=\int_{E_\Lambda}g\,d\tau$,
and $T_\Lambda g(v)=\int_{E_\Lambda}K(v,e)g(e)\,d\tau(e)$.

\begin{prop} \label{prop:robust-lp-erasure}
With the above notation, if $s_\Lambda>0$, then for every $p\geq1$,
\[
\|T_\Lambda g\|_{L^p(V,\mu)}
\geq
\mu(V)^{\frac1p-1}s_\Lambda.
\]
For $p>1$, equality holds if and only if
$T_\Lambda g=s_\Lambda/\mu(V)$ $\mu$-a.e.
\end{prop}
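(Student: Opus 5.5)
We will apply Theorem~\ref{t3} to the restricted measure space on $E_\Lambda$, in the same way as the lower bound in Theorem~\ref{thm:doubly-stochastic}. Take $(V,\mu)$ as given, replace $(E,\tau)$ by $(E_\Lambda,\tau|_{E_\Lambda})$, and set $M=K|_{V\times E_\Lambda}$ and $wt=g|_{E_\Lambda}$. With these choices:
\begin{itemize}
\item hypothesis (i) is the assumption $0<\mu(V)<\infty$;
\item hypothesis (ii) holds because $s=\int_{E_\Lambda}g\,d\tau=s_\Lambda$, which lies in $(0,\infty)$ since $g$ is integrable and $s_\Lambda>0$;
\item the column normalization $\int_V K(v,e)\,d\mu(v)=1$ still holds for $\tau$-a.e.\ $e\in E_\Lambda$, so $c=1$;
\item the generalized degree is $\delta(v)=\int_{E_\Lambda}K(v,e)g(e)\,d\tau(e)=T_\Lambda g(v)\ge0$, so we may take $I=[0,\infty)$.
\end{itemize}
By Tonelli,
\[
\int_V\delta\,d\mu=\int_{E_\Lambda}g(e)\Bigl(\int_VK(v,e)\,d\mu(v)\Bigr)d\tau(e)=s_\Lambda,
\]
so $\bar\delta=s_\Lambda/\mu(V)$. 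Hypothesis (iii) follows from the same computation, since $K$ and $g$ are nonnegative.

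Next choose $\varphi(t)=t^{p-1}$ on $[0,\infty)$, so that $f(t)=t^p$ is convex for $p\ge1$ and strictly convex for $p>1$. By Fubini and the definition of $\delta$, the left side of $(*)$ equals
\[
\frac1{s_\Lambda}\int_V\delta^{p-1}\delta\,d\mu=\frac1{s_\Lambda}\|T_\Lambda g\|_p^p.
\]
Inequality $(*)$ therefore reads
\[
\|T_\Lambda g\|_p^p\ge s_\Lambda\Bigl(\frac{s_\Lambda}{\mu(V)}\Bigr)^{p-1}=\mu(V)^{1-p}s_\Lambda^p.
\]
Taking $p$th roots gives the claimed bound $\|T_\Lambda g\|_p\ge\mu(V)^{\frac1p-1}s_\Lambda$. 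For $p>1$, the equality clause of Theorem~\ref{t3} gives equality exactly when $\delta=\bar\delta$ a.e., that is, when $T_\Lambda g=s_\Lambda/\mu(V)$ $\mu$-a.e.

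The only delicate point is the integrability hypothesis (v) when $\|T_\Lambda g\|_p=\infty$, since $g$ is only assumed integrable and not in $L^p$. In that case the inequality holds trivially, and equality cannot occur because the right side is finite. So we may assume $T_\Lambda g\in L^p(V,\mu)$. Then the integral in (v), which is $\int_V\delta^p\,d\mu$ because everything is nonnegative, is finite, and Theorem~\ref{t3} applies.

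As a cross-check, Jensen's inequality for the probability measure $\mu/\mu(V)$ directly gives
\[
\frac{1}{\mu(V)}\int_V\delta^p\,d\mu\ge\Bigl(\frac{1}{\mu(V)}\int_V\delta\,d\mu\Bigr)^p,
\]
which is the same bound, with the same equality case for $p>1$ by strict convexity.
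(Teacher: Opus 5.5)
Your proposal is correct and is essentially the paper's proof. Like the paper, you apply Theorem~\ref{t3} on the restricted space $E_\Lambda$ with $M=K$, $wt=g$, $\varphi(t)=t^{p-1}$, so that $c=1$, $\delta=T_\Lambda g$, $\bar\delta=s_\Lambda/\mu(V)$, and strict convexity of $t^p$ gives the equality case; your explicit check of hypotheses (ii)--(v), in particular setting aside the case $\|T_\Lambda g\|_p=\infty$ before invoking (v), supplies details the paper leaves implicit.
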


\begin{proof}
Apply Theorem~\ref{t3} to the restricted edge space
$E_\Lambda$. The corresponding generalized degree function is
\[
\delta_\Lambda(v)
=
\int_{E_\Lambda}K(v,e)g(e)\,d\tau(e)
=
T_\Lambda g(v),
\]
and the constant column integral is $c=1$. Moreover, $\overline{\delta}_\Lambda
=
\frac{s_\Lambda}{\mu(V)}.$ Taking $\phi(t)=t^{p-1},$
we have $t\phi(t)=t^p$, which is convex for $p\geq1$. Hence
\[
\frac1{s_\Lambda}
\int_V(T_\Lambda g(v))^p\,d\mu(v)
\geq
\left(\frac{s_\Lambda}{\mu(V)}\right)^{p-1}.
\]
Therefore, $\|T_\Lambda g\|_p
\geq
\mu(V)^{\frac1p-1}s_\Lambda.$ The equality statement follows from strict convexity of $t^p$ for
$p>1$.
\end{proof}

\begin{cor} \label{cor:uniform-erasure-robustness}
Under the hypotheses of Proposition~\ref{prop:robust-lp-erasure},
let $\mathcal E$ be a family of admissible erased subsets of $E$.
Suppose that there exists $\eta\in(0,1]$ such that
$\int_{E\setminus\Lambda}g(e)\,d\tau(e)
\geq
\eta\int_Eg(e)\,d\tau(e),$
for every $\Lambda\in\mathcal E$. Then, for every $p\geq1$ and every
$\Lambda\in\mathcal E$,
\[
\|T_\Lambda g\|_{L^p(V,\mu)}
\geq
\eta\,
\mu(V)^{\frac1p-1}
\|g\|_{L^1(E,\tau)}.
\]
\end{cor}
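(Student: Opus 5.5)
The corollary follows directly from Proposition~\ref{prop:robust-lp-erasure} combined with the uniform mass-retention hypothesis. Fix $p\geq1$ and $\Lambda\in\mathcal E$, and write $s=\|g\|_{L^1(E,\tau)}=\int_E g\,d\tau$. Since $g\geq0$, this is the total mass of $g$.

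The plan splits into two cases according to whether $s>0$.

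\emph{Case $s>0$.} The hypothesis gives $s_\Lambda=\int_{E\setminus\Lambda}g\,d\tau\geq\eta s>0$, since $\eta>0$. So the positivity requirement $s_\Lambda>0$ of Proposition~\ref{prop:robust-lp-erasure} holds, and that proposition yields
\[
\|T_\Lambda g\|_{L^p(V,\mu)}\geq\mu(V)^{\frac1p-1}s_\Lambda\geq\eta\,\mu(V)^{\frac1p-1}\|g\|_{L^1(E,\tau)}.
\]
The second inequality uses only that $\mu(V)^{\frac1p-1}>0$, which holds because $0<\mu(V)<\infty$.

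\emph{Case $s=0$.} Here $g=0$ $\tau$-a.e., so the right-hand side is zero and the inequality is trivial, because the left-hand side is a norm and hence nonnegative.

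There is no genuine obstacle. The only point needing care is checking that the positivity hypothesis $s_\Lambda>0$ of Proposition~\ref{prop:robust-lp-erasure} is available, and this is exactly what $\eta>0$ together with the retention bound provides. The degenerate case $s=0$ must be handled separately, as above, because Proposition~\ref{prop:robust-lp-erasure} does not apply there.
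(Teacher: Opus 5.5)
Your proof is correct and follows the paper's argument: bound $s_\Lambda\geq\eta\|g\|_{L^1(E,\tau)}$ and invoke Proposition~\ref{prop:robust-lp-erasure}. Your explicit check that $s_\Lambda>0$ when $\|g\|_{L^1(E,\tau)}>0$, and your separate treatment of the trivial case $\|g\|_{L^1(E,\tau)}=0$, fill in details that the paper's one-line proof leaves implicit.
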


\begin{proof}
For every $\Lambda\in\mathcal E$,\, $s_\Lambda
\geq
\eta\|g\|_{L^1(E,\tau)}.$ The assertion follows immediately from
Proposition~\ref{prop:robust-lp-erasure}.
\end{proof}

\section{Conclusion and future directions}

We have presented applications of the measure-theoretic inequality in
Theorem~\ref{t3} to positive integral operators, Fej\'er means,
probability and entropy, and robustness under erasures. The resulting
estimates illustrate connections between the underlying Jensen-type
inequality and operator theory, harmonic analysis, and probability.
Future work may consider broader classes of positive kernels,
quantitative stability estimates, and optimal systems minimizing
expected or worst-case error under random and deterministic erasures.

\section{Acknowledgment}

 The first author is grateful to the Mohapatra Family Foundation and the College of Graduate Studies at the University of Central Florida for supporting his postdoctoral fellowship during the course of this research.

\section{Funding}

This research did not receive any specific grant from funding agencies in the public, commercial, or not-for-profit sectors.

\end{document}